\documentclass{amsart}

\usepackage{graphicx,amsmath, verbatim, amssymb, color,mathtools,hyperref,tikz}
\usepackage[margin=1.40in]{geometry}
\allowdisplaybreaks

\newtheorem{theorem}{Theorem}[section]

\newtheorem{corollary}[theorem]{Corollary}

\newtheorem{proposition}[theorem]{Proposition}

\newtheorem{question}[theorem]{Questions}

\theoremstyle{remark}
\newtheorem{remark}[theorem]{Remark}

\newtheoremstyle{TheoremNum}
        { 1em}{1 em}              
        {\itshape}                      
        {}                              
        {\bfseries}                     
        {.}                             
        { 0.5 em}                             
        {\thmname{#1}\thmnote{ \bfseries #3}}
    \theoremstyle{TheoremNum}

 \newtheoremstyle{TheoremNum}
        { 1em}{1 em}              
        {\itshape}                      
        {}                              
        {\bfseries}                     
        {.}                             
        { 0.5 em}                             
        {\thmname{#1}\thmnote{ \bfseries #3}}
    \theoremstyle{TheoremNum}

     \newtheoremstyle{TheoremNum}
        { 1em}{1 em}              
        {\itshape}                      
        {}                              
        {\bfseries}                     
        {.}                             
        { 0.5 em}                             
        {\thmname{#1}\thmnote{ \bfseries #3}}
    \theoremstyle{TheoremNum}

\usepackage[latin1]{inputenc}
\usepackage{amsmath,tikz-cd}
\usepackage{setspace}
\usepackage{amsfonts, mathrsfs}
\usepackage{amssymb}

\usepackage{pdflscape}

\numberwithin{equation}{section}



\begin{document}

\title{The degree of irrationality of most abelian surfaces is 4}
\author{Olivier Martin}

\thanks{The author acknowledges the support of the Natural Sciences and Engineering Research Council of Canada (NSERC). O. Martin est partiellement financ\'e par le Conseil de recherches en sciences naturelles et en g\'enie du Canada (CRSNG)}


\address{Department of Mathematics, University of Chicago, IL, 60637}
\email{olivier@uchicago.edu}

\maketitle

\begin{abstract}
The degree of irrationality of a smooth projective variety $X$ is the minimal degree of a dominant rational map $X\dashrightarrow \mathbb{P}^{\dim X}$. We show that if an abelian surface $A$ over $\mathbb{C}$ is such that the image of the intersection pairing $\text{Sym}^2NS(A)\to \mathbb{Z}$ does not contain $12$, then it has degree of irrationality $4$. In particular, a very general $(1,d)$-polarized abelian surface has degree of irrationality $4$ provided that $d\nmid 6$. This answers two questions of Yoshihara by providing the first examples of abelian surfaces with degree of irrationality greater than $3$, and showing that the degree of irrationality is not isogeny-invariant for abelian surfaces. 
\end{abstract}

\section{Introduction}

The \emph{degree of irrationality} $\text{irr}(X)$ of a smooth projective variety $X$ is the minimal degree of a dominant rational map $X\to \mathbb{P}^{\dim X}$. It is a birational invariant that measures how far $X$ is from being rational. While we have a relatively good understanding of the degree of irrationality of very general hypersurfaces of large degree (see \cite{BDPELU}), comparatively little is known for other classes of complex projective varietes. Since \cite{BDPELU} exploits positivity properties of $K_X$ to provide lower bounds on measures of irrationality for $X$, it is natural to consider $K$-trivial varieties as a case of particular interest. In this direction, \cite{P},\cite{AP},\cite{V},\cite{M}, and \cite{CMNP} use rational equivalence of zero-cycles on abelian varieties to obtain lower bounds on measures of irrationality for very general abelian varieties. These articles all use some form of induction on the dimension of the abelian variety, with abelian surfaces as base case. Consequently, they are powerless to provide bounds on measures of irrationality for abelian surfaces.\\

The degree of irrationality of any abelian surface is at least 3 by a result of Alzati and Pirola \cite{AP2}. Tokunaga and Yoshihara also proved that this bound is sharp in \cite{TY}. They show that if an abelian surface contains a smooth curve of genus 3, then it admits a degree 3 dominant rational map to $\mathbb{P}^2$. In particular, a very general $(1,2)$-polarized abelian surface has degree of irrationality $3$. For the same reason, the degree of irrationality of the product of two isogenous non-CM elliptic curves $E $ and $E'$ is $3$ if the minimal degree of an isogeny between $E$ and $E'$ is not $1,3,5,9,11,15,21,29,35,39,51,65,95,105,165,231$, or one other odd number if the generalized Riemann hypothesis is false \cite{RS}. Tokunaga and Yoshihara also provide an example of an elliptic curve $E$ with complex multiplication such that $E\times E$ contains a smooth genus $3$ curve as well as an example of a Jacobian of a genus $2$ curve which has degree of irrationality $3$. This prompted Yoshihara to ask in Problem 10 of \cite{Y} the following questions:
\begin{enumerate}
\item  Is there an abelian surface $A$ satisfying $\text{irr}(A)\geq 4$?
\item Do isogenous abelian surfaces have the same degree of irrationality? 
\end{enumerate}
The most recent progress on the study of the degree of irrationality of very general abelian surfaces is due to Chen who showed in \cite{C} that it is at most $4$, independently of the polarization type. This invalidated a conjecture of \cite{BDPELU}. Chen's result was improved on in \cite{CS} where it is shown that this bound applies to every abelian surface. The main result of this article is:

\begin{theorem}\label{main}
If $A$ is an abelian surface such that the image of the intersection pairing $\textup{Sym}^2NS(A)\to \mathbb{Z}$ does not contain $12\mathbb{Z}$, then the degree of irrationality of $A$ is $4$.
\end{theorem}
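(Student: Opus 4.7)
The upper bound $\mathrm{irr}(A)\le 4$ is due to Chen--Stapleton; the task is therefore to prove $\mathrm{irr}(A)\ge 4$ under the stated hypothesis. I argue by contradiction: suppose $\mathrm{irr}(A)\le 3$. Combined with Alzati--Pirola's $\mathrm{irr}(A)\ge 3$, this means there exists a dominant rational map $f\colon A \dashrightarrow \mathbb{P}^2$ of degree exactly $3$. The goal is to extract from $f$ two classes in $NS(A)$ whose intersection number is $12$, contradicting the hypothesis.

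The starting point is a fiber-sum constraint in the spirit of Mumford. General fibers of $f$ are pullbacks of rationally equivalent $0$-cycles on $\mathbb{P}^2$, hence are rationally equivalent degree-$3$ cycles on $A$ with equal Albanese class. Since the Albanese of an abelian surface is itself, via summation on $0$-cycles of fixed degree, the assignment $p \mapsto x_1(p)+x_2(p)+x_3(p)$ is a constant $c\in A$. Replacing $f$ by its composition with a translation arranges $c=0$, so that a general fiber sums to zero in $A$.

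Next I convert this condition into a Lagrangian surface inside a holomorphic symplectic fourfold. Resolving $f$ to a morphism $\tilde f\colon \tilde A\to\mathbb{P}^2$ and pushing each length-$3$ subscheme $\tilde f^{-1}(p)$ down to $A$, one obtains a morphism $\mathbb{P}^2\to A^{(3)}$ whose image lies in the fiber over $0$ of the sum map $\mu\colon A^{(3)}\to A$. Lifting through the Hilbert--Chow resolution $A^{[3]}\to A^{(3)}$ produces a $2$-dimensional subvariety $S$ of the generalized Kummer fourfold $K_2(A)\subset A^{[3]}$. Because $S$ is birational to $\mathbb{P}^2$ it carries no nontrivial holomorphic $2$-forms, so the symplectic form on $K_2(A)$ restricts to zero on $S$: the surface $S$ is Lagrangian inside the irreducible holomorphic symplectic fourfold $K_2(A)$, and generically $S\cong\mathbb{P}^2$ is an embedded Lagrangian plane.

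The crucial last step is to translate the existence of this Lagrangian surface into a numerical identity on $NS(A)$. The N\'eron--Severi lattice of $K_2(A)$ sits inside $NS(A)\oplus\mathbb{Z}\cdot e$, where $e$ is the exceptional class with $q(e)=-6$ under the Beauville--Bogomolov--Fujiki form; meanwhile, the normal bundle of a Lagrangian $\mathbb{P}^2$ is $\Omega^1_{\mathbb{P}^2}$, with $c_2=3$. A careful intersection computation on $K_2(A)$, combining these universal constants with the Fujiki relation and the fundamental class of $S$, should yield a Pell-type relation among the numerical invariants of $S$ which, once disentangled, produces two explicit classes $\alpha,\beta\in NS(A)$ with $\alpha\cdot\beta=12$; this contradicts the hypothesis and completes the proof. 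The main obstacle is precisely this final extraction: while the production of a Lagrangian surface in $K_2(A)$ follows rather directly from the degree-$3$ rational map together with the fiber-sum condition, identifying the specific integer $12$ --- rather than some other universal constant --- requires careful bookkeeping of the Chern numbers, the Fujiki constant, and the structure of $NS(K_2(A))$. Handling the degenerate cases in which $S$ is only birational to $\mathbb{P}^2$ or meets the singular locus of the Hilbert--Chow morphism must also be addressed, but I expect these to reduce to the generic case without altering the divisibility conclusion.
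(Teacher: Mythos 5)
Your reduction to excluding degree-$3$ maps, the normalization so that fibers sum to $0_A$, and the invocation of Mumford's theorem (in the guise of the isotropy/Lagrangian condition for the holomorphic $2$-form) all match the paper's strategy. But the proof has a genuine gap exactly where you flag it: the entire content of the theorem is the derivation of the integer $12$, and your proposal never produces it. Saying that an intersection computation on $K_2(A)$ ``should yield a Pell-type relation'' which ``once disentangled'' gives classes with $\alpha\cdot\beta=12$ is a hope, not an argument, and the route you propose is missing the one ingredient that makes the computation possible. The existence of a Lagrangian surface birational to $\mathbb{P}^2$ in $K_2(A)$, by itself, imposes no condition on $NS(A)$: you need to know the \emph{class} of that surface (equivalently, of the correspondence it induces on $A\times A$) modulo classes coming from $NS(A)$, and nothing in your outline pins that class down. (Also, your claim that $S$ is generically an \emph{embedded} Lagrangian plane is unjustified --- $F_\varphi$ is birational onto its image, but the image can be singular and can meet the exceptional locus of the Hilbert--Chow morphism in codimension $1$; so the normal-bundle-of-$\mathbb{P}^2$ input you want to feed into the Beauville--Bogomolov/Fujiki machinery is not available.)

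The missing idea is the decomposition of the diagonal of $\mathbb{P}^2$. The paper works not in $K_2(A)$ but directly with the correspondence
$Z=\overline{\{(x,y): x\neq y,\ \varphi(x)=\varphi(y)\}}\subset A\times A$,
writes $Z$ as $(\pi\times\pi)_*(\widetilde\varphi\times\widetilde\varphi)^*\Delta_{\mathbb{P}^2}$ minus $\Delta_A$ and contracted-curve contributions, and then uses
$[\Delta_{\mathbb{P}^2}]=[\mathrm{pt}\times\mathbb{P}^2]+h_1h_2+[\mathbb{P}^2\times\mathrm{pt}]$
to conclude
$[Z]\in 3([A\times 0_A]+[0_A\times A])-[\Delta_A]+\mathrm{Sym}^2NS(A)$.
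Pairing with $\eta\wedge\overline\eta$ and using Mumford's theorem ($[Z]\cdot\eta\wedge\overline\eta=0$) then reduces everything to two elementary integrals on $A^3$: the fixed part contributes $3\cdot 4+3\cdot 4-36=-12$ and the $\mathrm{Sym}^2NS(A)$ part contributes an intersection number $(C,C')$, forcing $12$ into the image of the intersection pairing. This Künneth/Chow decomposition of $\Delta_{\mathbb{P}^2}$ is what determines the class of your Lagrangian surface modulo $NS(A)$; without it (or an equivalent substitute in $H^*(K_2(A))$), your final step cannot be carried out.
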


\begin{corollary}
A very general $(1,d)$-polarized abelian surface has degree of irrationality $4$ if $d\nmid 6$.
\end{corollary}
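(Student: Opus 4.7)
The plan is to reduce the corollary to a direct application of Theorem \ref{main} by verifying its hypothesis on the intersection pairing for a very general $(1,d)$-polarized abelian surface with $d\nmid 6$.

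First, I would invoke the standard fact that for $A$ very general in the moduli space of $(1,d)$-polarized abelian surfaces, $NS(A)$ has rank one and is generated by the class of the polarization $L$. The usual justification is that the Noether--Lefschetz locus parametrizing surfaces with strictly larger Picard rank is a countable union of proper closed subvarieties of the moduli space, so a very general point avoids all of them.

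Next, a polarization of type $(1,d)$ on an abelian surface satisfies $L^2=2d$; indeed $\chi(L)=d$ in this case, and Riemann--Roch on an abelian surface gives $\chi(L)=L^2/2$. Since $NS(A)=\mathbb{Z}\cdot L$, the group $\text{Sym}^2 NS(A)$ is infinite cyclic, generated by $L\otimes L$, so the image of the intersection pairing $\text{Sym}^2 NS(A)\to \mathbb{Z}$ is $L^2\cdot \mathbb{Z}=2d\mathbb{Z}$.

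Finally, one observes that $12\mathbb{Z}\subset 2d\mathbb{Z}$ if and only if $2d\mid 12$, equivalently $d\mid 6$. Hence when $d\nmid 6$ the hypothesis of Theorem \ref{main} is verified, and the theorem yields $\text{irr}(A)=4$. There is no genuine obstacle in this deduction: all of the substantive work is absorbed into Theorem \ref{main}, and the only external input is the (entirely standard) rank-one statement for very general polarized abelian surfaces.
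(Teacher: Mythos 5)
Your proposal is correct and follows the same route as the paper: reduce to Theorem \ref{main} by noting that a very general $(1,d)$-polarized abelian surface has Picard number $1$, so the image of the intersection pairing is $2d\mathbb{Z}$, which contains $12\mathbb{Z}$ exactly when $d\mid 6$. The extra details you supply (Riemann--Roch giving $L^2=2d$, the Noether--Lefschetz justification of the rank-one statement) are standard and only make explicit what the paper leaves implicit.
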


\begin{proof}
A very general abelian variety $A$ has Picard number $1$ so that $NS(A)$ is generated by the polarizing class. The image of $\textup{Sym}^2NS(A)\to \mathbb{Z}$ is thus $2d\mathbb{Z}$, where $d$ is the degree of the polarization.
\end{proof}

This theorem answers the first question of Yoshihara affirmatively and the second question negatively. Indeed, a very general $(1,2)$-polarized abelian surface has degree of irrationality $3$ and is isogenous to a $(1,d)$-polarized abelian surface of Picard number $1$ for some $d\nmid 6$. It follows that the degree of irrationality is not an isogeny invariant for abelian surfaces.

\begin{remark}
These results also hold if the degree of irrationality is replaced with the minimal degree of a dominant rational map to a surface $S$ with $CH_0(S)\cong \mathbb{Z}$. Indeed, we only use that $CH_0(\mathbb{P}^2)\cong\mathbb{Z}$ and that 
$$[\Delta_{\mathbb{P}^2}]\in [\text{pt}\times \mathbb{P}^2]+[\mathbb{P}^2\times \text{pt}]+\text{Sym}^2NS(\mathbb{P}^2)\subset H^4(\mathbb{P}^2\times \mathbb{P}^2,\mathbb{Z}),$$which is also valid for surfaces with a trivial Chow group of zero-cycles.
\end{remark}

\noindent These results motivate the following questions.

\begin{question}$\;$
\begin{enumerate}
\item What is the degree of irrationality of a very general abelian surface with a polarization of degree $1,3,$ or $6$?
\item If $E$ and $E'$ are non-isogenous elliptic curves, what is the degree of irrationality of $E\times E'$?
\end{enumerate}
\end{question}
The second question was suggested by Yoshihara as a possible approach to finding abelian surfaces with degree of irrationality $4$. It is generally believed that the answer is $4$. Unfortunately our approach fails to answer this question.\\

Theorem \ref{main} is obtained by a simple cohomological computation. Although Mumford's theorem for rational equivalence of zero-cycles on surfaces with $p_g\neq 0$ makes an appearance, our methods are completely different from those of \cite{P},\cite{AP},\cite{V}, \cite{M}, and \cite{CMNP}. In the second and final section we offer a proof of Theorem \ref{main}.

\vspace{2em}
\subsection*{Acknowledgements}
I am extremely grateful to Claire Voisin who suggested sweeping simplifications throughout. My original argument used an intersection count and a direct and lengthy computation. I am also indebted to Gian Pietro Pirola for carefully reading an early manuscript and pointing out a mistake. I would also like to thank Madhav Nori and Alexander Beilinson for useful discussions. This article was written in part during a research stay in Paris and I am much obliged to Claire Voisin and the Coll\`ege de France for their hospitality and the University of Chicago for providing funding.

\section{Proofs}
\begin{proof}[Proof of Theorem \ref{main}]By the Alzati-Pirola bound from \cite{AP2} and the results of \cite{C} and \cite{CS}, it suffices to show that, under the hypotheses of Theorem \ref{main}, there are no dominant rational maps $\varphi: A\dasharrow \mathbb{P}^2$ of degree $3$. Such a rational map gives rise to another rational map $F_\varphi: \mathbb{P}^2\dasharrow \text{Sym}^3(A)$, which associates to a generic point of $\mathbb{P}^2$ the fiber of $\varphi$ over it. We will denote by $Z'$ its image. Note that $Z'$ is a \emph{constant cycle subvariety}: it parametrizes rationally equivalent effective zero-cycles of degree $3$ on $A$. We can thus assume that $Z'\subset \text{Sym}^{3,0}(A)$, where $\text{Sym}^{3,0}(A)$ is the fiber of the summation map $\text{Sym}^3(A)\to A$ over $0_A\in A$. Let 
$$q: A^{3,0}:=\ker(A^3\to A)\to \text{Sym}^{3,0}(A)$$
be the quotient map and $Z:=q^{-1}(Z')$.\\

 We fix the isomorphism $\iota: A^2\to A^{3,0}$ given by $\iota(a,b)=(a,b,-a-b)$ and we abuse notation to write $Z$ for $\iota^{-1}(Z)$. Letting $U\subset A$ be an open on which the rational map $\varphi$ restrict to an \'etale morphism, we have
$$Z=\overline{\{(x,y)\in U^2: x\neq y, \varphi(x)=\varphi(y)\}}.$$\\

Another way to obtain this cycle is by resolving the indeterminacies of $\varphi$ to get a morphism $\widetilde{\varphi}: \widetilde{A}\to \mathbb{P}^2$ and a birational morphism $\pi: \widetilde{A}\to A$, such that $\varphi|_{U}=\widetilde{\varphi}\circ (\pi|_{U})^{-1}$. Letting $E_i, i\in I$ denote the irreducible curves in $\widetilde{A}$ that are contracted by $\widetilde{\varphi}$ and $D_i:=\widetilde{\varphi}_*(E_i)$, we obtain
\begin{align*}Z&=(\pi\times \pi)_*\left[(\widetilde{\varphi}\times \widetilde{\varphi})^{-1}(\Delta_{\mathbb{P}^2})-\Delta_{\widetilde{A}}-\sum_{(i,j)\in I^2}\mu_{i,j}\cdot  E_i\times E_j \right]\\&
=(\pi\times \pi)_*(\widetilde{\varphi}\times \widetilde{\varphi})^{*}(\Delta_{\mathbb{P}^2})-\Delta_{A}-\sum_{(i,j)\in I^2}\mu_{i,j}\cdot  D_i\times D_j, \end{align*}
for some coefficients $\mu_{i,j}=\mu_{j,i}\in \mathbb{Z}$. This gives us a lot of information about the cohomology class of $Z$. If we let $\text{pt}\subset \mathbb{P}^2$ denote a subvariety consisting of a single point, $h:=c_1(\mathcal{O}_{\mathbb{P}^2}(1))$, $h_i:=\text{pr}_i^*(h)$, and $h_i':=\text{pr}_1^*[\pi_*\circ \varphi^*(h)]$, we see that
\begin{align*}[Z]&=(\pi\times \pi)_*(\widetilde{\varphi}\times \widetilde{\varphi})^*(h_1^2+h_1h_2+h_2^2)-[\Delta_{A}]-\sum_{(i,j)\in I^2}\mu_{i,j}\cdot  [D_i\times D_j]\\
&= (\pi\times \pi)_*(\widetilde{\varphi}\times \widetilde{\varphi})^*([\text{pt}\times \mathbb{P}^2]+h_1h_2+[\mathbb{P}^2\times \text{pt}])-[\Delta_{A}]-\sum_{(i,j)\in I^2}\mu_{i,j}\cdot  [D_i\times D_j]\\
&= \deg\widetilde{\varphi}\cdot([A\times 0_A]+[0_A\times A])+h_1'h_2' -[\Delta_{A}]-\sum_{(i,j)\in I^2}\mu_{i,j}\cdot  [D_i\times D_j].
 \end{align*}
 Hence,
 $$[Z]\in 3([A\times 0_A]+[0_A\times A])-[\Delta_{A}]+\text{Sym}^2NS(A)\subset H^4(A\times A,\mathbb{Z}).$$\\

 But from our first description of $Z$, this variety parametrizes rationally equivalent effective cycles of degree $3$. By Mumford's theorem \cite{Mu}, it follows that $Z$ must be totally isotropic for the holomorphic $2$-form $\eta:=\iota^*(\omega_1+\omega_2+\omega_3)$, where $\omega\in H^0(A,\Omega^2_A)$ is a generator and $\omega_i:=\text{pr}_i^*\omega$. This imposes the additional constraint $[Z]\cdot \eta=0$ on the cycle class of $Z$. In particular $[Z]\cdot \eta\wedge \overline{\eta}=0$. Theorem \ref{main} then follows at once from the following:
\newpage
\begin{proposition}\label{coh}
If $\omega\in H^0(A,\Omega^2_A)$ is such that $\int_A \omega\wedge \overline{\omega}=1$, then 
$$\int_{A\times A}\left(3([A\times 0_A]+[0_A\times A])-[\Delta_{A}]\right)\cdot  \eta\wedge \overline{\eta}=-12$$
and
$$\int_{A\times A}\textup{pr}_1^*([C])\cdot \textup{pr}_2^*([C'])\cdot \eta\wedge \overline{\eta}=(C,C')\qquad \text{for any }[C],[C']\in NS(A).$$
\end{proposition}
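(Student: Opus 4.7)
The plan is to reduce both identities to explicit computations by exploiting the K\"unneth decomposition of $\eta$. Since negation acts trivially on $H^{2,0}(A)$, we have $\iota^*\omega_3 = s^*\omega$, where $s\colon A\times A\to A$ is the addition map; and since $s$ induces $\textup{pr}_1^*+\textup{pr}_2^*$ on $H^1$, fixing $\omega=\theta_1\wedge\theta_2$ for a basis $\theta_1,\theta_2$ of $H^{1,0}(A)$ yields
$$s^*\omega=\textup{pr}_1^*\omega+\textup{pr}_2^*\omega+\sigma,\qquad \sigma:=\textup{pr}_1^*\theta_1\wedge\textup{pr}_2^*\theta_2-\textup{pr}_1^*\theta_2\wedge\textup{pr}_2^*\theta_1,$$
so $\eta = 2\textup{pr}_1^*\omega+2\textup{pr}_2^*\omega+\sigma$.

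For the first identity, I would restrict $\eta$ to each of the three surfaces. On $A\times 0_A$ all terms involving $\textup{pr}_2^*$ vanish, so $\eta|_{A\times 0_A}=2\omega$ and $\int_{A\times 0_A}\eta\wedge\bar\eta=4$; symmetrically $\int_{0_A\times A}\eta\wedge\bar\eta=4$. On the diagonal $s$ restricts to multiplication by $2$ on $A$, so $s^*\omega|_{\Delta_A}=4\omega$ and $\eta|_{\Delta_A}=6\omega$, giving $\int_{\Delta_A}\eta\wedge\bar\eta=36$. Then $3(4+4)-36=-12$.

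For the second identity I would expand $\eta\wedge\bar\eta$ into the nine products coming from the three summands of $\eta$ times those of $\bar\eta$. After multiplying by $\textup{pr}_1^*[C]\cdot\textup{pr}_2^*[C']$ (of K\"unneth bidegree $(2,2)$ on the two copies of $A$), a short bidegree count shows that only $\sigma\wedge\bar\sigma$ can contribute: the mixed terms $\textup{pr}_i^*\omega\wedge\bar\sigma$ and $\sigma\wedge\overline{\textup{pr}_j^*\omega}$ have K\"unneth bidegree $(3,1)$ or $(1,3)$, the terms $\textup{pr}_i^*\omega\wedge\overline{\textup{pr}_i^*\omega}$ are supported entirely on one factor, and $\textup{pr}_1^*\omega\wedge\overline{\textup{pr}_2^*\omega}$ (and its mate) would force Hodge type $(3,1)$ on a factor of $A$, which vanishes. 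It remains to expand $\sigma\wedge\bar\sigma$ in the basis $\{\theta_i\wedge\bar\theta_j\}$, integrate against $\textup{pr}_1^*[C]\cdot\textup{pr}_2^*[C']$ with $[C]=\sum a_{ij}\theta_i\wedge\bar\theta_j$, $[C']=\sum b_{ij}\theta_i\wedge\bar\theta_j$, and observe that the resulting bilinear form agrees with $\int_A[C]\wedge[C']=(C,C')$ term-by-term.

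The delicate step will be the sign bookkeeping in the $\sigma\wedge\bar\sigma$ expansion when reordering $1$-forms from the two factors, together with the parallel sign computation of the intersection pairing on $H^{1,1}(A)$; both reduce to small permutation-sign calculations in a $4$-dimensional basis, and the two matrices will match.
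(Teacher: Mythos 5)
Your proof is correct, and it closes. For the first identity it is essentially the paper's computation in different notation: restricting $\eta$ to $A\times 0_A$, $0_A\times A$, and $\Delta_A$ and obtaining $2\omega$, $2\omega$, $6\omega$ is exactly what the paper does by pulling back $\omega_1+\omega_2+\omega_3$ along $a\mapsto(a,0,-a)$ and $a\mapsto(a,a,-2a)$, using that multiplication by $m$ scales $\omega$ by $m^2$; the values $4,4,36$ and the total $3(4+4)-36=-12$ agree. For the second identity you and the paper make the same key reduction --- only one K\"unneth component of $\eta\wedge\overline{\eta}$ can pair nontrivially with the $(2,2)$-class $\textup{pr}_1^*[C]\cdot \textup{pr}_2^*[C']$ --- but you then evaluate that component by expanding $\sigma\wedge\overline{\sigma}$ in the basis $\theta_i\wedge\overline{\theta}_j$ and matching bilinear forms, whereas the paper sidesteps all sign bookkeeping by observing that the surviving term is $s^*(\omega\wedge\overline{\omega})=s^*[0_A]$ (a top form of integral $1$ is Poincar\'e dual to a point), so the integral becomes the geometric count $\#\{(c,c')\in C\times C' : c+c'=x\}=(C,-C')=(C,C')$. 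Your bidegree analysis ruling out the other eight terms is sound (in particular the $(3,1)$-type vanishing for $\textup{pr}_1^*\omega\wedge\overline{\textup{pr}_2^*\omega}$ against $\textup{pr}_1^*[C]$), and the ``delicate'' expansion does work out: writing $[C]=\sum a_{ij}\theta_i\wedge\overline{\theta}_j$ and $[C']=\sum b_{ij}\theta_i\wedge\overline{\theta}_j$, the four terms of $\sigma\wedge\overline{\sigma}$ yield $-a_{11}b_{22}-a_{22}b_{11}+a_{12}b_{21}+a_{21}b_{12}=\int_A[C]\wedge[C']$. If you want to avoid the sign-chasing you flagged, substitute the paper's geometric evaluation of the surviving term.
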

\begin{proof}
Given $m_1,m_2,m_3\in \mathbb{Z}$, let $f_{(m_1,m_2,m_3)}: A\to A^3$ be the morphism given by $a\mapsto (m_1a,m_2a,m_3a)$. We have 
\begin{align*}\int_{A\times A} [A\times 0_A]\cdot  \eta\wedge \overline{\eta}&=\int_{A\times A} [A\times 0_A]\cdot \iota^*\left((\omega_1+\omega_2+\omega_3)\wedge(\overline{\omega}_1+\overline{\omega}_2+\overline{\omega}_3)\right)\\
&=\int_{A^3} {f_{(1,0,-1)}}_*([A])\cdot (\omega_1+\omega_2+\omega_3)\wedge(\overline{\omega}_1+\overline{\omega}_2+\overline{\omega}_3)\\
&=\int_{A}  {f_{(1,0,-1)}}^*\left((\omega_1+\omega_2+\omega_3)\wedge(\overline{\omega}_1+\overline{\omega}_2+\overline{\omega}_3)\right)\\
&=4\int_{A}\omega\wedge \overline{\omega}=4,\end{align*}
\noindent and by symmetry
\begin{align*}\int_{A\times A} [0_A\times A]\cdot  \eta\wedge \overline{\eta}=4.\end{align*}
Similarly, we see that
\begin{align*}\int_{A\times A} [\Delta_A]\cdot  \eta\wedge \overline{\eta}=&\int_{A^3}{f_{(1,1,2)}}_*([A])\cdot (\omega_1+\omega_2+\omega_3)\wedge(\overline{\omega}_1+\overline{\omega}_2+\overline{\omega}_3)\\
=&\int_{A } f_{(1,1,2)}^*\left( (\omega_1+\omega_2+\omega_3)\wedge(\overline{\omega}_1+\overline{\omega}_2+\overline{\omega}_3)\right)\\
=&36\int_A \omega\wedge \overline{\omega}=36.\end{align*}
Finally,
\begin{align*}\int_{A\times A}\text{pr}_1^*[C]\cdot \text{pr}_2^*[C']\cdot  \eta\wedge \overline{\eta}=&\int_{A^3}[\{(c,c',c+c'): c\in C, c'\in C'\}]\cdot (\omega_1+\omega_2+\omega_3)\wedge(\overline{\omega}_1+\overline{\omega}_2+\overline{\omega}_3)\\
=&\int_{A^3}[\{(c,c',c+c'): c\in C, c'\in C'\}]\cdot \omega_3\wedge\overline{\omega}_3\\
=&\int_{A^3}[\{(c,c',c+c'): c\in C, c'\in C'\}]\cdot \text{pr}_3^*([0_A])\\
=&(C,-C')=(C,C'),
\end{align*}
where $-C'$ is the image of $C'$ under the multiplication by $-1$ automorphism of $A$.
\end{proof}
\end{proof}

\begin{remark}
Consider an abelian surface $A$ of Picard number $1$ with a polarization $L$ of degree $d|6$. If there exists a dominant rational map $\varphi: A\dashrightarrow \mathbb{P}^2$ of degree $3$, the associated cohomology class $[Z]\in H^4(A^2,\mathbb{Z})$ is 
$$[Z]=3([A\times 0_A]+[0_A\times A])-[\Delta_{A}]+(6/d)\cdot\text{pr}_1^*c_1(L)\cdot \text{pr}_2^*c_1(L).$$
\end{remark}

\end{document}